\theoremstyle{plain} 
\newtheorem{theorem}{Theorem}
\newtheorem{lemma}[theorem]{Lemma}
\newtheorem{proposition}[theorem]{Proposition}
\theoremstyle{definition}
\newtheorem{definition}[theorem]{Definition}
\newtheorem{remark}[theorem]{Remark}
\newtheorem{example}[theorem]{Example}
\newtheorem*{theorem*}{Theorem}
\newtheorem*{principle*}{Principle}
\numberwithin{theorem}{section}
  \newcommand{\twobytwo}[4]{
\left(\begin{array}{cc}
#1 & #2 \\
#3 & #4 \\	
\end{array}\right)}
\newcommand{\CC}{\mathbb{C}}
\newcommand{\HH}{\mathbb{H}}
\newcommand{\PP}{\mathbb{P}}
\newcommand{\QQ}{\mathbb{Q}}
\newcommand{\RR}{\mathbb{R}}
\newcommand{\ZZ}{\mathbb{Z}}
\newcommand{\cU}{\mathcal{U}}
\DeclareMathOperator{\GL}{GL}
\DeclareMathOperator{\End}{End}
\DeclareMathOperator{\Aut}{Aut}
\DeclareMathOperator{\G}{PSL_2(\ZZ)}
\DeclareMathOperator{\PSL}{PSL}
\DeclareMathOperator{\Jac}{Jac}
\begin{document}

\title[Jacobians]{Period relations for Riemann surfaces with many automorphisms}

\author[L. Candelori]{Luca Candelori}
\author[J. Fogliasso]{Jack Fogliasso}
\author[C. Marks]{Christopher Marks}
\author[S. Moses]{Skip Moses}

\address{Department of Mathematics, Wayne State University, 656 W Kirby, Detroit, MI 48202, USA}
\email{candelori@wayne.edu}

\address{Department of Mathematics and Statistics, California State University, Chico, 400 West First Street, Chico, CA 95929, USA}
\email{jfogliasso@mail.csuchico.edu;}

\address{Department of Mathematics and Statistics, California State University, Chico, 400 West First Street, Chico, CA 95929, USA}
\email{cmarks@csuchico.edu}

\address{Department of Mathematics and Statistics, California State University, Chico, 400 West First Street, Chico, CA 95929, USA}
\email{smoses7@mail.csuchico.edu}

\begin{abstract}
By employing the theory of vector-valued automorphic forms for non-unitarizable representations, we provide a new bound for the number of linear relations with algebraic coefficients between the periods of an algebraic Riemann surface with many automorphisms. The previous best-known general bound for this number was the genus of the Riemann surface, a result due to Wolfart. Our new bound significantly improves on this estimate, and it can be computed explicitly from the canonical representation of the Riemann surface. As observed by Shiga and Wolfart, this bound may then be used to estimate the dimension of the endomorphism algebra of the Jacobian of the Riemann surface. We demonstrate with a few examples how this improved bound allows one, in some instances, to actually compute the dimension of this endomorphism algebra, and to determine whether the Jacobian has complex multiplication. 
\end{abstract}

\maketitle

\section{Introduction}

The theory of vector-valued modular forms, though nascent in the work of various nineteenth century authors, is a relatively recent development in mathematics. Perhaps the leading motivation for working out a general theory in this area comes from two-dimensional conformal field theory, or more precisely from the theory of vertex operator algebras (VOAs). Indeed, in some sense the article \cite{Zhu} -- in which Zhu proved that the graded dimensions of the simple modules for a rational VOA constitute a weakly holomorphic vector-valued modular function -- created a demand for understanding how such objects work in general, and what may be learned about them by studying the representations according to which they transform. Knopp and Mason \cite{KnoppMason1,KnoppMason2,Mason07} were the first to give a systematic treatment of vector-valued modular forms, with significant contemporary contributions by Bantay and Gannon \cite{BG,Bantay} as well. This initial motivation from theoretical physics leads one naturally to arithmetic considerations, and more recently a number of authors have utilized representations of the modular group to study the {\it bounded denominator conjecture} for noncongruence modular forms \cite{Mason12,FM14,Marks15,FM16,FGM}, the vector-valued version of which originated with Mason as well.

In this article we utilize the vector-valued point of view for another arithmetic application, this time to the classical theory of Riemann surfaces and in particular to the study of the periods of an algebraic Riemann surface. More precisely, suppose $X$ is a compact Riemann surface of genus $g$ defined over $\overline{\QQ}$. For any holomorphic differential $\omega$ defined over $\overline{\QQ}$ and any homology class $[\gamma] \in H_1(X,\ZZ)$, represented by a closed loop $\gamma$ on $X$, the {\em period} associated to $\omega$ and $\gamma$ is the complex number $\int_{\gamma} \omega$. Following \cite{Wolfart}, let  
$$
V_X :=  \mathrm{span}_{\overline{\QQ}} \left\{ \int_{\gamma} \omega : \gamma \in H_1(X,\ZZ),\ \omega \text{ defined over } \overline{\QQ} \right\}, 
$$
viewed as a $\overline{\QQ}$-subspace of $\CC$. Since the dimension of the space of $\overline{\QQ}$-differentials is $g$ and the rank of $H_1(X,\ZZ)$ is $2g$, we deduce that $\dim_{\overline{\QQ}} V_X \leq 2g^2$. This bound is saturated, and in fact it is realized by generic compact Riemann surfaces of genus $g$. 

There are, however, special classes of algebraic curves where the above bound is a substantial overestimate. For example, in \cite{Wolfart} Wolfart studies Riemann surfaces with {\em many automorphisms}. These curves are characterized by the fact that if one fixes any desired genus and automorphism group, the resulting family of curves is zero-dimensional. For such a curve $X$, it is proven in \cite{Wolfart} that 
\begin{equation}
\label{eq:trivialBoundIntro}
\dim_{\overline{\QQ}} V_X \leq g,
\end{equation}
reflecting the fact that the larger automorphism group imposes additional linear relations among the periods. 

In this article we improve the bound \eqref{eq:trivialBoundIntro}, by inserting terms depending on the {\em canonical representation} of $X$. More precisely, let $X$ be a curve with many automorphisms, let $G$ be the group of automorphisms of $X$, and denote by 
$$
\rho_X: G^{\rm op} \longrightarrow \GL(\Omega^1(X))
$$
be the $g$-dimensional complex representation given by the action of $g\in G$ on differentials given by pull-back (note that pull-back is contravariant, thus the `op' is needed to obtain a group representation). This is called the {\em canonical representation} of $X$, since $\Omega^1(X)$ is also known as the {\em canonical bundle} of $X$. Its decomposition into irreducible characters can readily be computed using the {\em Chevalley-Weil} formula \cite{Chevalley-Weil} (see \cite{Candelori-CW} for a modern account). Now any Riemann surface $X$ with many automorphisms can actually be uniformized as a quotient \cite{Wolfart} 
$$
X(N):= \mathbb{H}/N , \quad N\triangleleft \Delta
$$
where $\mathbb{H}$ denotes the complex upper half-plane and $N\triangleleft \Delta$ is a finite-index normal subgroup of a {\em co-compact Fuchsian triangle group} $$
\Delta = \Delta(p,q,r) = \{ \delta_0, \delta_1, \delta_{\infty} : \delta_0^p = \delta_1^q = \delta_{\infty}^r = \delta_0\delta_1\delta_{\infty}\}
$$
acting as linear fractional transformations on $\mathbb{H}$, so that $G$ appears as the finite quotient $\Delta/N$. In this presentation there are now three generators $\delta_0, \delta_1, \delta_{\infty}$ of $G$, which, by abuse of notation, represent the images of the corresponding generators of $\Delta$. In terms of these generators, we prove:
\begin{theorem}
\label{theorem:MainThmIntro}
Let $X$ be a curve with many automorphisms, uniformized as $X = X(N)$ and with canonical representation $\rho_N := \rho_{X(N)}$. Then 
$$
\dim_{\overline{\QQ}} V_X \leq g - d_0 - d_1 - d_{\infty}
$$
where $d_x = \dim_{\CC}(\ker(\rho_N(\delta_x) - I_g))$ is the geometric multiplicity of the eigenvalue 1  in $\rho_N(\delta_x)$, $x\in \{0,1,\infty\}$.  
\end{theorem}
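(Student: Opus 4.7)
My plan is to view $\Omega^1(X)$ as a space of weight-$2$ vector-valued modular forms for the triangle group $\Delta$ via the uniformization $X = \mathbb{H}/N$, and then to refine Wolfart's transcendence argument by exploiting the local structure of such forms at the three elliptic fixed points $e_0, e_1, e_\infty$.

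First, I would decompose $\Omega^1(X) = \bigoplus_\rho W_\rho$ into $G$-isotypic components for $\rho_N$; the irreducibles $\rho$ and their multiplicities $n_\rho$ are computable via the Chevalley-Weil formula. Since $\rho_N(\delta_x)$ acts on $W_\rho$ through $n_\rho$ copies of $\rho(\delta_x)$, one has
\[
d_x \;=\; \sum_\rho n_\rho \cdot \dim \ker\bigl(\rho(\delta_x) - I\bigr),
\]
so it suffices to prove, for each irreducible $\rho$ appearing, the isotypic bound
\[
\dim_{\overline{\QQ}} V_X^\rho \;\leq\; n_\rho \Bigl(\dim \rho - \sum_{x \in \{0,1,\infty\}}\dim\ker\bigl(\rho(\delta_x)-I\bigr)\Bigr),
\]
where $V_X^\rho$ denotes the $\overline{\QQ}$-span of periods coming from $W_\rho$, and then to sum these bounds using $\dim_{\overline{\QQ}} V_X \leq \sum_\rho \dim_{\overline{\QQ}} V_X^\rho$.

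Next, I would identify $W_\rho$ with a space of weight-$2$ vector-valued modular forms for $\Delta$ with representation $\rho$, and express the associated periods through Eichler-type integrals of these forms. The local monodromy of such an integral around $e_x$ is governed by $\rho(\delta_x)$; on the eigenspace of eigenvalue $1$ this monodromy is trivial, so the corresponding component of the Eichler integral is single-valued near $e_x$. In the Fuchsian-ODE picture this is the Eisenstein-like part, corresponding to resonant integer exponents at $e_x$, and the analytic subgroup theorem of Wüstholz (as used in Wolfart's original argument) forces the periods attached to these directions to lie in $\overline{\QQ}$. This produces $n_\rho \sum_x \dim\ker(\rho(\delta_x)-I)$ independent $\overline{\QQ}$-linear relations among the periods from $W_\rho$, reducing the basic Wolfart-type bound $n_\rho \dim \rho$ for $\dim V_X^\rho$ by exactly this amount and yielding the isotypic inequality above.

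The main obstacle will be the second step: rigorously controlling the eigenvalue-$1$ contributions. At a regular singular point of a Fuchsian system with integer exponent, logarithmic terms can a priori appear in the local expansion, and one must either rule these out or show that they do not produce additional transcendental periods. Here the vector-valued framework developed in this article becomes indispensable, since the weight-$2$ twist of a representation of $\Delta$ is typically non-unitarizable, and it is precisely this non-unitarizability that makes the eigenvalue-$1$ invariants visible at each of the three elliptic points separately rather than only as a single global invariant -- ultimately yielding the three-term improvement over Wolfart's original bound.
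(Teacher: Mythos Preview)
Your proposal differs substantially from the paper's proof, and the mechanism you propose for the key step is not correct as stated.

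The paper does \emph{not} invoke W\"ustholz's analytic subgroup theorem anywhere in the proof of this bound; the argument is purely representation-theoretic. The main construction (Theorem~\ref{thm:MainVVmfThm}) shows that the period map $\gamma\mapsto\Omega(\gamma)=\int_{\tau_0}^{\gamma\tau_0}F(z)\,dz$ is a $1$-cocycle for $\Delta$ with values in $\rho_N$, so the entire period lattice is encoded in a single class in $H^1(\Delta,\rho_N)$. The paper then computes this cohomology group: passing first to $\widetilde\Delta=\Delta(p,q,\infty)\simeq\ZZ/p\ZZ*\ZZ/q\ZZ$, Mayer--Vietoris gives $\dim H^1(\widetilde\Delta,\rho_N)=g-d_0-d_1$, and a basis of ``fundamental cocycles'' $\kappa_1,\dots,\kappa_d$ can be chosen with all entries in a cyclotomic field (Lemmas~\ref{lemma:MV-isomorphism} and~\ref{lemma:mainAlgebraicLemma}). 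Writing $\Omega=\sum_i\lambda_i\kappa_i$ expresses \emph{every} period as a $\overline\QQ$-linear combination of the $d$ complex numbers $\lambda_i$, giving the bound $g-d_0-d_1$. The remaining $d_\infty$ relations come from the identity $\Omega(\delta_\infty^{-r})=0$, which forces $d_\infty$ independent $\overline\QQ$-linear relations among the $\lambda_i$.

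The gap in your approach is the assertion that ``the analytic subgroup theorem of W\"ustholz forces the periods attached to these [eigenvalue-$1$] directions to lie in $\overline\QQ$.'' This is not what the eigenvalue-$1$ condition buys you, and W\"ustholz is not the right tool here. The correct statement is not that certain periods are algebraic, but rather that the period cocycle $\Omega$ has only $g-d_0-d_1-d_\infty$ degrees of freedom modulo coboundaries (and coboundaries contribute nothing to $V_X$ since a coboundary vanishes on $N\subseteq\ker\rho_N$). Concretely, the relations $\delta_x^{m_x}=1$ force $(I+\rho_N(\delta_x)+\cdots+\rho_N(\delta_x)^{m_x-1})\Omega(\delta_x)=0$, which kills the component of $\Omega(\delta_x)$ in the $1$-eigenspace of $\rho_N(\delta_x)$; this is a linear constraint on the cocycle, not a transcendence statement about individual periods. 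Your local Fuchsian/Eichler-integral picture is gesturing at the right phenomenon, but you have misidentified the mechanism, and the ``main obstacle'' you flag (logarithms at resonant exponents) dissolves once one works cohomologically: the finite-order condition on $\rho_N(\delta_x)$ makes $\pi_N(\delta_x)$ diagonalizable, so no logarithms appear. The isotypic decomposition you start with is harmless but unnecessary; the paper works with $\rho_N$ as a whole.
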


In practice, the numbers $d_0,d_1,d_{\infty}$ are easy to compute given the character of $\rho_N$, and they tend to be linear in $g$, providing a significant improvement over the bound \eqref{eq:trivialBoundIntro}. We demonstrate sample computations of the bound of Theorem \ref{theorem:MainThmIntro} in Section \ref{sec:Examples} below. 

The proof of Theorem \ref{theorem:MainThmIntro} builds on the ideas of \cite{Candelori-Marks}, and it relies crucially on the theory of vector-valued modular forms for non-unitarizable representations, as pioneered by Knopp and Mason \cite{KnoppMason1}, \cite{KnoppMason2}. The proof of the Theorem, together with the connection with modular forms, can be found in Sections  \ref{sec:vvmfAndPeriods} and \ref{sub:sharperBound} below.  

As demonstrated already by Shiga and Wolfart in \cite{Shiga-Wolfart}, the knowledge of the  integer $\dim_{\overline{\QQ}} V_X$ is essentially equivalent to the knowledge of the dimension of the $\mathbb{Q}$-algebra of endomorphisms of $\Jac(X)$ (see Theorem \ref{thm:ShigaWolfartTheorem} below for a precise statement). There are many open questions about the endomorphism algebras of Jacobians, such as Coleman's conjecture regarding Riemann surfaces with {\em complex multiplication} and the Ekedahl-Serre question regarding Riemann surfaces whose Jacobian is isogenous to a product of elliptic curves. These questions may be viewed as the main motivation for studying linear relations between the periods of $X$. We demonstrate in Section \ref{sec:Examples} how Theorem \ref{theorem:MainThmIntro}, in conjuction with the theorems of  Shiga-Wolfart, allow one to compute the decomposition of the Jacobians of some Riemann surfaces with many automorphisms. 

{\bf Acknowledgments.} The first author would like to thank Bill Hoffman and Ling Long at Louisiana State University for helpful observations during the early stages of this project, as well as Tony Shaska at Oakland University of helpful conversations. The third author is extremely happy to acknowledge the ongoing contribution of Geoffrey Mason to his research career, and this article is dedicated to him on the occasion of his 70th birthday. The second, third, and fourth authors all benefited from internal grants funded by the Research Foundation and the College of Natural Sciences at California State University, Chico.

\section{Riemann surfaces with many automorphisms}
\label{sub:RSWithAutos}
Suppose $X$ is a compact, connected Riemann surface. By the uniformization theorem of Riemann surfaces, $X$ is isomorphic to a quotient $\Gamma\backslash\cU$, where $\cU$ is either the complex upper half-plane $\HH$, the Riemann sphere $\PP^1$, or the complex plane $\CC$ and $\Gamma\subseteq \Aut(\cU)$ is a discrete subgroup. By Riemann's existence theorem, $X$ also possesses the structure of an algebraic curve over $\CC$: that is, it can be defined as the zero locus of a collection of polynomials with complex coefficients inside a suitable projective space. Suppose that these polynomials can be chosen to all have coefficients in the field of algebraic numbers $\bar{\QQ}$. Then we say that $X$ is {\em defined} over $\bar{\QQ}$. In this case, we can be more specific about the group $\Gamma$ uniformizing $X$:

\begin{theorem}[Belyi \cite{Belyi}, Wolfart \cite{Wolfart}]
\label{thm:BelyiWolfart}
Let $X$ be a compact, connected Riemann surface defined over $\bar{\QQ}$. Then $X$ is isomorphic to $\Gamma\backslash\HH$, for some finite-index subgroup $\Gamma$ of a co-compact Fuchsian triangle group $\Delta\subseteq \PSL_2(\RR)\simeq \Aut(\HH)$.  
\end{theorem}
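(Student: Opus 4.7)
The plan is to combine Belyi's theorem with a standard orbifold-covering construction. First, invoke Belyi's theorem: since $X$ is defined over $\overline{\QQ}$, there exists a non-constant meromorphic function $\beta\colon X \to \PP^1$ whose ramification is concentrated over $\{0, 1, \infty\}$ (a Belyi map). This is the deep arithmetic input and I would cite it as a black box.

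Next, choose positive integers $p, q, r$ such that $p$ is a common multiple of every ramification index of $\beta$ at the points above $0$, similarly $q$ above $1$ and $r$ above $\infty$, and in addition $1/p + 1/q + 1/r < 1$; the last condition can always be arranged by enlarging $p, q, r$. The resulting hyperbolic triangle group $\Delta = \Delta(p, q, r)$ embeds as a co-compact Fuchsian subgroup of $\PSL_2(\RR)$, and the quotient map $\HH \to \Delta\backslash\HH$ identifies the coarse space $\Delta\backslash\HH$ with $\PP^1$ as a Riemann surface, while endowing this copy of $\PP^1$ with the structure of the orbifold $\PP^1_{p,q,r}$ whose cone points of orders $p, q, r$ lie at $0, 1, \infty$. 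Equivalently, $\HH$ is the universal cover of $\PP^1_{p,q,r}$ with orbifold deck group $\Delta$.

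Now reinterpret $\beta$ as a morphism of orbifolds $X \to \PP^1_{p,q,r}$, where $X$ carries the trivial orbifold structure. The divisibility conditions on $p, q, r$ guarantee that at each preimage of $\{0,1,\infty\}$ the local ramification index of $\beta$ divides the orbifold order downstairs, so in the orbifold category $\beta$ is an \emph{unramified} finite cover. The Galois correspondence between unramified orbifold covers and finite-index subgroups of the orbifold fundamental group then yields a subgroup $\Gamma \subseteq \Delta$ of finite index together with an isomorphism $\Gamma\backslash\HH \simeq X$ of Riemann surfaces, which is precisely the assertion of the theorem.

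The main conceptual step is the passage from the branched cover $\beta$ of ordinary Riemann surfaces to an unramified cover of orbifolds; once this is in place, the rest is covering theory applied to $\HH \to \Delta\backslash\HH$. A more elementary bookkeeping avoids orbifolds entirely: restrict $\beta$ to an honest topological covering $X \setminus \beta^{-1}\{0,1,\infty\} \to \PP^1 \setminus \{0,1,\infty\}$, which corresponds to a finite-index subgroup of $\pi_1(\PP^1\setminus\{0,1,\infty\})$, and then verify, using the standard presentation of $\Delta$ as a quotient of this free group, that the ramification divisibility ensures the subgroup descends to a finite-index subgroup $\Gamma$ of $\Delta$ for which $\Gamma\backslash\HH \simeq X$.
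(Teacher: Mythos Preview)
The paper does not supply a proof of this theorem; it is stated with attributions to Belyi and Wolfart and used as a black box. So there is no in-paper argument to compare against, and your sketch is essentially the standard proof one finds in Wolfart's work.

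That said, there is a small but genuine slip in the orbifold step. You assert that, with $X$ given the \emph{trivial} orbifold structure, the divisibility conditions make $\beta\colon X\to\PP^1_{p,q,r}$ an unramified orbifold cover. This is not right: for a point $x\in X$ lying over the cone point of order $p$, with local ramification index $e$, the map is an orbifold covering at $x$ only if the upstairs orbifold order is $p/e$. Trivial structure upstairs forces $e=p$, which you have not arranged (different preimages of $0$ may have different ramification indices, all merely dividing $p$). The fix is either to endow $X$ with the induced orbifold structure (order $p/e$ at each such $x$, etc.)\ and then observe that the underlying Riemann surface of $\Gamma\backslash\HH$ is still $X$, with $\Gamma$ now possibly containing elliptic elements; or simply to run the alternative argument you outline at the end via $\pi_1(\PP^1\setminus\{0,1,\infty\})$, which is correct as stated and avoids the issue entirely. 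With this correction the argument is complete.
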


For any choice of positive integers $p,q,r > 0$, the {\em triangle group} $\Delta = \Delta(p,q,r)$ is defined abstractly as the infinite group generated by three elements $\delta_0, \delta_1, \delta_{\infty}$ with presentation
$$
\Delta = \Delta(p,q,r) = \{ \delta_0, \delta_1, \delta_{\infty} : \delta_0^p = \delta_1^q = \delta_{\infty}^r = \delta_0\delta_1\delta_{\infty}\}.
$$
The triangle group is {\em Fuchsian} if it can be embedded in $\PSL_2(\RR)$, and it is {\em co-compact} if the corresponding quotient $\Delta\backslash\HH$ is compact. In this article $\Delta$ will always be Fuchsian, but not necessarily co-compact. For example, it is customary to allow any of $p,q,r$ to be $\infty$ whenever the corresponding generator is of infinite order, so that, for example, 
$$
\Delta(\infty, \infty, \infty) = F_2 = \Gamma(2), \quad \Delta(2, 3, \infty) = \PSL_2(\ZZ). 
$$
When this happens the triangle groups are no longer co-compact, in which case we apply a suitable compactification to the quotient $\Delta\backslash\HH$ by adding {\em cusps} (as is the case for {\em modular curves}, for example). In either case, we let $X(\Delta)$ denote the corresponding Riemann surface, and similarly we denote by $X(\Gamma)$ the compact Riemann surface corresponding to finite-index subgroups $\Gamma\leq\Delta$, each of which yields a finite ramified cover $X(\Gamma)\rightarrow X(\Delta)$.

Theorem \ref{thm:BelyiWolfart} is remarkable in that it allows one to study algebraic Riemann surfaces from the point of view of group theory and the representation theory of the triangle groups $\Delta(p,q,r)$, which are very much amenable to computations. For example, given a fixed triangle group $\Delta$, we get a 1-1 correspondence
$$
\left\{\text{finite-index normal subgroups } N \triangleleft \Delta \right\} \longleftrightarrow \left\{\text{ramified Galois covers } X(N)\rightarrow X(\Delta)\right\}. 
$$
As is the case in algebraic number theory, where questions about general finite extensions $L/K$ of fields can be tackled by first studying {\em Galois} extensions, here too the study of an algebraic Riemann surface $X(\Gamma)\rightarrow X(\Delta)$ can often be reduced to studying {\em Galois} covers, by taking a suitable `Galois closure'. By the above correspondence, that is the same as restricting to {\em normal} subgroups $N\subseteq \Delta$. For simplicity we will do so in this article, keeping in mind that all the results presented can be extended to the case of a general Riemann surface, not necessarily uniformized by a normal subgroup.

Now, when $N\triangleleft\,\Delta$ is normal, the finite quotient $G = \Delta/N$ acts as a group of automorphisms on $X(N)$, which partially justifies the following:

\begin{definition}[Wolfart \cite{Wolfart}]
\label{def:ManyAutos}
A Riemann surface $X$ defined over $\bar{\QQ}$ has {\em many automorphisms} if there is an isomorphism $X \simeq X(N)$, where $N\triangleleft\,\Delta$ is a normal subgroup of a triangle group $\Delta$. 
\end{definition}

In general, let $A(G)$ be the locus of genus $g$ Riemann surfaces with automorphism group equal to $G$, viewed as a subvariety of the moduli space of all Riemann surfaces of genus $g$ (see, e.g., \cite{Magaard-Shaska}). Then $X$ has many automorphisms if and only if $\dim A(G) = 0$.

\section{Endomorphism algebras of abelian varieties and transcendence}
\label{sub:EndoAlgebras}

Suppose now that $A$ is an abelian variety of dimension $g>0$ defined over $\bar{\QQ}$. Equivalently, $A$ is a complex torus $\CC^g/\Lambda$ together with an embedding into projective space defined by polynomial equations with coefficients in $\bar{\QQ}$. In this case, the $g$-dimensional $\CC$-vector space of holomorphic 1-forms for $A$ contains the $\bar{\QQ}$-vector space of 1-forms that are {\em defined over $\bar{\QQ}$}, that is, those differentials that come by base-change from the regular, K\"{a}hler differentials on the underlying algebraic variety. These $\bar{\QQ}$-differentials can be integrated over homology classes of cycles $\gamma \in H^1(A,\ZZ)$, and the resulting complex number 
$$
\int_{\gamma} \omega \in \CC
$$ 
is a {\em period} of $\omega$. This integration process is only possible by viewing $A$ as a complex manifold, a highly transcendental operation, and therefore periods tend to be transcendental numbers even if $\omega$ is defined over $\bar{\QQ}$. To keep track of `how many' transcendental periods we get, we make the following important definition:

\begin{definition}[\cite{Wolfart}]
Let $A$ be an abelian variety defined over $\bar{\QQ}$. The {\em period $\bar{\QQ}$-span} of $A$ is the $\bar{\QQ}$-vector space
$$
V_A := \mathrm{span}_{\bar{\QQ}}\left\{ \int_{\gamma} \omega : \gamma \in H^1(A,\ZZ),\ \omega \text{ defined over } \bar{\QQ}\right\} \subseteq \CC
$$
\end{definition}

If we regard $\CC$ as an infinite-dimensional vector space over $\bar{\QQ}$, then $V_A$ is seen to be a $\bar{\QQ}$-subspace of $\CC$. It is clearly finite-dimensional: indeed, the $\bar{\QQ}$-space of algebraic differential 1-forms for $A$ is $g$-dimensional, by base-change, and the rank of $H^1(A,\ZZ)$ as a free $\ZZ$-module is $2g$. Therefore 
$$
\dim_{\bar{\QQ}} V_A \leq 2g^2
$$
for any abelian variety $A$ defined over $\bar{\QQ}$. We call this the {\em trivial bound} on $\dim_{\bar{\QQ}} V_A$. 

How can we get `extra' linear $\bar{\QQ}$-relations on $V_A$? If $\phi$ is a non-scalar  endomorphism of $A$ (defined over $\bar{\QQ}$) then it is easy to show that $\phi$ induces a $\bar{\QQ}$-relation via its action by pull-back on $\omega$ and its natural action on $H^1(A,\ZZ)$. The amazing fact is that {\em all} such linear relations are given in this way \cite{Shiga-Wolfart}, as follows from W\"{u}stholz' analytic subgroup theorem.  To state the precise result, let 
$$
\End_0(A):= \End(A)\otimes\mathbb{Q}
$$
be the endomorphism algebra of $A$. 

\begin{theorem}[Shiga-Wolfart, \cite{Shiga-Wolfart}]
\label{thm:ShigaWolfartTheorem}
Suppose $A$ is a simple abelian variety of dimension $g$. Then 
$$
\dim_{\bar{\QQ}} V_A = \frac{2g^2}{\dim_{\QQ}\End_0(A)}.
$$
More generally, if $A$ is isogenous to a product $A_1^{k_1} \times \cdots \times A_m^{k_m}$ of simple abelian varities, each of dimension $g_i$, then 
$$
\dim_{\bar{\QQ}} V_A = \sum_{i=1}^m\frac{2g_i^2}{\dim_{\QQ}\End_0(A_i)}.
$$
\end{theorem}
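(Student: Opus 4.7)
The plan is to reduce the general statement to the case of a simple abelian variety and settle that case by combining representation theory with W\"{u}stholz's analytic subgroup theorem. Assume first that $A$ is simple, and set $V := \Omega^1(A/\bar{\QQ})$, $H := H_1(A,\QQ)$, and $D := \End_0(A)$, a division algebra of $\QQ$-dimension $d$. The first move is to package the periods into a $\bar\QQ$-bilinear pairing
$$
P \colon V \otimes_\QQ H \longrightarrow \CC, \qquad \omega \otimes \gamma \longmapsto \int_\gamma \omega,
$$
whose image is, by definition, $V_A$. Every $\phi \in D$ acts contravariantly on $V$ via pullback $\phi^*$ and covariantly on $H$ via pushforward $\phi_*$, and the identity $\int_\gamma \phi^*\omega = \int_{\phi_*\gamma}\omega$ makes $P$ balanced for this $D$-action. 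Extending scalars on $H$ to $\bar\QQ$ therefore yields a surjection
$$
\bar P \colon V \otimes_{D \otimes_\QQ \bar\QQ} (H \otimes_\QQ \bar\QQ) \twoheadrightarrow V_A,
$$
and the task becomes (i) computing the $\bar\QQ$-dimension of the source and (ii) showing $\bar P$ is injective.

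Step (i) is a direct representation-theoretic count. Because $D$ is a division algebra, $H$ is free over $D$ of rank $2g/d$; base-changing, $H_{\bar\QQ}$ is free over $D_{\bar\QQ}$ of the same rank, and
$$
\dim_{\bar\QQ}\bigl( V \otimes_{D_{\bar\QQ}} H_{\bar\QQ}\bigr) \;=\; \frac{2g}{d}\cdot \dim_{\bar\QQ} V \;=\; \frac{2g^2}{d}.
$$

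Step (ii) is the transcendence heart of the proof, and it is where I would invoke W\"{u}stholz's analytic subgroup theorem: for any commutative algebraic group $G$ over $\bar\QQ$ and any $u \in \mathrm{Lie}(G)_\CC$ with $\exp_G(u) \in G(\bar\QQ)$, the smallest $\bar\QQ$-subspace of $\mathrm{Lie}(G)_\CC$ containing $u$ is the complexification of the Lie algebra of a connected algebraic subgroup of $G$ defined over $\bar\QQ$. The idea is to interpret a hypothetical nonzero $\xi \in \ker(\bar P)$ as producing, for a suitable power $A^n$, a $\bar\QQ$-linear functional on $\mathrm{Lie}(A^n)_\CC$ that sends the period lattice into $\bar\QQ$. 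Applying W\"{u}stholz to the kernel of this functional produces a proper algebraic subgroup of $A^n$ defined over $\bar\QQ$, and Poincar\'e complete reducibility identifies such subgroups with $D$-submodules of $H_{\bar\QQ}$, thereby forcing $\xi$ to vanish in the tensor product. This is the main obstacle I anticipate, because translating the purely algebraic kernel element into the Lie-algebraic data required by W\"{u}stholz demands careful bookkeeping of the de Rham/Betti duality between $V$ and $\mathrm{Lie}(A)_{\bar\QQ}$ together with the corresponding $D$-actions.

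For the general case, $V_A$ is an isogeny invariant because an isogeny induces $\bar\QQ$-linear changes of basis on differentials and on homology, so I may assume $A = A_1^{k_1} \times \cdots \times A_m^{k_m}$ exactly. The differentials and homology of a product split as direct sums, giving $V_A = V_{A_1} + \cdots + V_{A_m}$ as $\bar\QQ$-subspaces of $\CC$, and passing to powers contributes no additional periods so that $V_{A_i^{k_i}} = V_{A_i}$. A second application of W\"{u}stholz to the product, invoking $\Hom(A_i,A_j) = 0$ for non-isogenous $A_i,A_j$, shows that this sum is $\bar\QQ$-direct. Adding the dimensions computed in the simple case then yields
$$
\dim_{\bar\QQ}V_A = \sum_{i=1}^m \frac{2g_i^2}{\dim_\QQ \End_0(A_i)}.
$$
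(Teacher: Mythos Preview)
The paper does not prove this theorem: it is quoted from Shiga--Wolfart \cite{Shiga-Wolfart} and used as a black box, so there is no ``paper's own proof'' to compare against. Your sketch is in fact a faithful outline of the argument in the original Shiga--Wolfart paper, which also rests on W\"ustholz's analytic subgroup theorem; the upper bound via the balanced $D$-bilinear pairing and the dimension count $\dim_{\bar\QQ}(V\otimes_{D_{\bar\QQ}}H_{\bar\QQ})=2g^2/d$ are exactly what one does, and the reduction of the general case via isogeny invariance and $V_{A^k}=V_A$ is correct.

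The one place where your outline is genuinely incomplete---and you flag it yourself---is Step~(ii). As written, a nonzero $\xi=\sum \omega_i\otimes\gamma_i\in\ker\bar P$ gives a single $\bar\QQ$-linear relation $\sum\int_{\gamma_i}\omega_i=0$ among periods, not a $\bar\QQ$-functional on $\mathrm{Lie}(A^n)$ sending the entire period lattice into $\bar\QQ$. The standard bridge is to pass to $A^n$ with $n$ the number of terms in $\xi$, view the tuple $(\gamma_1,\ldots,\gamma_n)$ as a single homology class on $A^n$, and identify $(\omega_1,\ldots,\omega_n)$ with a cotangent vector there; W\"ustholz then forces this cotangent vector to factor through the Lie algebra of a proper abelian subvariety of $A^n$, and Poincar\'e reducibility together with $\End_0(A)=D$ translates that into the $D$-balancedness relation already quotiented out. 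You have the right architecture, but the translation from ``kernel element'' to ``algebraic subgroup'' needs those extra two sentences to be a proof rather than a plan.
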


Therefore, the knowledge of $\dim_{\bar{\QQ}} V_A$ gives information about the dimension of the endomorphism algebra of $A$, and it is often enough to actually determine $\dim_{\QQ}\End_0(A)$. 

\begin{example}
\label{ex:CMEcurveSchneider}
Suppose $g=1$, so that $A=E$ is an elliptic curve over $\bar{\QQ}$. In this case the space of $\bar{\QQ}$-differentials is one-dimensional, say, generated by $\omega$, and the integral homology $H^1(E,\ZZ)$ is a rank 2 free $\ZZ$-module generated by, say, $\gamma_1$ and $\gamma_2$. The periods of $\omega$ are the nonzero complex numbers
$$
\Omega_1 := \int_{\gamma_1} \omega, \quad \Omega_2 := \int_{\gamma_2} \omega. 
$$ 
Consider the ratio $\tau := \Omega_2/\Omega_1$. Generically, this ratio will be transcendental, so that $\dim_{\bar{\QQ}} V_E = 2$ and the elliptic curve attains the trivial bound of $2g^2$. By Theorem \ref{thm:ShigaWolfartTheorem}, this forces $\End_0(E) = \mathbb{Q}$, that is, the only endomorphisms of $E$ act as scalars.  On the other hand, when $\tau\in \bar{\QQ}$, then Theorem \ref{thm:ShigaWolfartTheorem} implies that $\dim_{\mathbb{Q}}\End_0(E) = 2$. As is well-known \cite{Silverman}, in this case $\End(E)$ is an order inside the ring of integers of an imaginary quadratic extension of $\mathbb{Q}$, and the elliptic curve is said to have {\em complex multiplication}. We thus recover the famous theorem of Schneider from 1936: 
$$
\tau \in \bar{\QQ} \Longleftrightarrow E \text{ has complex multiplication}.
$$

\end{example}

\begin{example}
\label{ex:trivialEndo}
In general, when $\dim_{\bar{\QQ}} V_A = 2g^2$ then Theorem \ref{thm:ShigaWolfartTheorem} implies  that $\End(A) = \ZZ$. 
\end{example}

\section{Jacobians with many automorphisms}
\label{sub:JacobiansManyAutos}

Suppose now that $X$ is a Riemann surface of genus $g>0$ and let $J(X)$ be its {\em Jacobian variety}, the abelian variety of dimension $g$ canonically associated to $X$ by taking the torus $\CC^g/\Lambda$, where $\Lambda$ is the rank $2g$ lattice spanned by the periods of $X$. If $X = X(N)\rightarrow X(\Delta)$ has many automorphisms (Def. \ref{def:ManyAutos}), then $X$ is defined over $\bar{\QQ}$, and its Jacobian variety $J(X)$ is an abelian variety also defined over $\bar{\QQ}$ \cite{Milne}. In this case, by using the fact that the group of automorphisms of the Galois cover $X(N)\rightarrow X(\Delta)$ acts transitively on $H_1(X(N),\ZZ)$,  it is possible to give a sharper general bound on the dimension of the period $\bar{\QQ}$-span of $\Jac(X)$: 

\begin{theorem}[Wolfart \cite{Wolfart}]
\label{thm: WolfartBound}
Suppose $X$ is a Riemann surface of genus $g>0$ with many automorphisms, let $J(X)$ be its Jacobian variety and let $V_X := V_{\Jac(X)}$ be the period $\bar{\QQ}$-span of $\Jac(X)$. Then 
$$
\dim_{\bar{\QQ}} V_X  \leq g.
$$
\end{theorem}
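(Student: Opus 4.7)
The plan is to reduce the bound on $V_X$ to integration over a single homology cycle, exploiting the large automorphism group $G = \Delta/N$. The starting point is the change-of-variables formula
$$\int_{\phi_*\gamma}\omega = \int_\gamma \phi^*\omega,$$
valid for any automorphism $\phi \in G$ and any holomorphic differential $\omega$. If one can exhibit a cycle $\gamma_0 \in H_1(X, \bar{\QQ})$ whose translates under $G$ span $H_1(X, \bar{\QQ})$ over $\bar{\QQ}$, then every period $\int_\gamma\omega$ decomposes as a $\bar{\QQ}$-linear combination of periods $\int_{\gamma_0} g^*\omega$ with $g \in G$ and $\omega \in \Omega^1_{\bar{\QQ}}(X)$. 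Because the automorphisms of $X$ are themselves defined over $\bar{\QQ}$, the pullback $g^*\omega$ remains in $\Omega^1_{\bar{\QQ}}(X)$, a $\bar{\QQ}$-space of dimension $g$, so
$$V_X \;\subseteq\; \mathrm{span}_{\bar{\QQ}}\Bigl\{\int_{\gamma_0}\omega' : \omega' \in \Omega^1_{\bar{\QQ}}(X)\Bigr\},$$
which has dimension at most $g$.

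The central task is therefore to show that $H_1(X, \QQ)$ is a \emph{cyclic} $\QQ[G]$-module; this is exactly the sense in which ``$G$ acts transitively on $H_1(X, \ZZ)$'' is to be read. After extending scalars to $\CC$ and invoking the Hodge decomposition $H_1(X, \CC) \cong \Omega^1(X) \oplus \overline{\Omega^1(X)}$ of $\CC[G]$-modules, cyclicity is equivalent to the assertion that every irreducible representation $\rho$ of $G$ appears in $H_1(X, \CC)$ with multiplicity at most $\dim\rho$. Writing $m_\rho$ for the multiplicity of $\rho$ in the canonical representation $\rho_X$, this amounts to the inequality
$$m_\rho + m_{\bar\rho} \;\leq\; \dim\rho$$
for every irreducible $\rho$ of $G$.

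To establish this inequality I would apply the Chevalley-Weil formula to the Galois cover $X \to X(\Delta) \cong \PP^1$, branched over precisely the three points with inertia generators $\delta_0, \delta_1, \delta_\infty$ of orders $p, q, r$. The formula expresses $m_\rho$ as a sum of fractional-part contributions at the three branch points, minus one copy of $\dim\rho$ arising from $g_{\PP^1} = 0$. Pairing the contributions for $\rho$ and $\bar{\rho}$ at each branch point, and using that the eigenvalues of $\bar{\rho}(\delta_x)$ are the inverses of those of $\rho(\delta_x)$, the fractional parts telescope to
$$m_\rho + m_{\bar\rho} \;=\; \dim\rho - \sum_{x \in \{0,1,\infty\}} \dim\ker\bigl(\rho(\delta_x) - I\bigr),$$
which is at most $\dim\rho$, as desired.

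The main obstacle is the bookkeeping inside Chevalley-Weil that yields this identity; once it is in place, cyclicity passes from $\CC[G]$ to $\bar{\QQ}[G]$ without difficulty, since $\bar{\QQ}$ is a splitting field for the finite group $G$, and a generator $\gamma_0 \in H_1(X, \bar{\QQ})$ can then be chosen. The reduction in the first paragraph then immediately produces $\dim_{\bar{\QQ}} V_X \leq g$.
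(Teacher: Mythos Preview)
Your argument is correct, and it follows exactly the outline the paper sketches: the paper does not prove Theorem~\ref{thm: WolfartBound} itself but attributes it to Wolfart, remarking only that it follows from ``the fact that the group of automorphisms of the Galois cover $X(N)\rightarrow X(\Delta)$ acts transitively on $H_1(X(N),\ZZ)$.'' You have correctly unpacked what this transitivity must mean---cyclicity of $H_1(X,\QQ)$ as a $\QQ[G]$-module---and supplied a clean proof of it via Chevalley--Weil. The identity
\[
m_\rho + m_{\bar\rho} \;=\; \dim\rho \;-\; \sum_{x\in\{0,1,\infty\}} \dim\ker\bigl(\rho(\delta_x)-I\bigr)
\]
for nontrivial irreducible $\rho$ is indeed what the fixed-point computation gives (e.g.\ via the Lefschetz trace formula on $H^1$), and the passage from cyclicity over $\CC$ to cyclicity over $\bar\QQ$ is immediate since $\bar\QQ$ is already a splitting field for $G$. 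The reduction in your first paragraph is then exactly Wolfart's.

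It is worth noting a pleasant by-product of your approach that the paper does not make explicit. Summing your identity against the multiplicities $m_\rho$ in the canonical representation $\rho_N=\bigoplus_\rho \rho^{m_\rho}$ yields
\[
g - d_0 - d_1 - d_\infty \;=\; \sum_{\rho} m_\rho\bigl(m_\rho + m_{\bar\rho}\bigr),
\]
so the right-hand side of the paper's main Theorem~\ref{thm:MainThm} is nothing other than a weighted count of the multiplicities appearing in $H_1$. Your proof of the weaker bound thus already encodes, representation by representation, the combinatorics underlying the paper's sharper bound, even though the paper arrives at $g-d_0-d_1-d_\infty$ by an entirely different route (via the cocycle $\Omega$ and the cohomology $H^1(\Delta,\rho_N)$).
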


This bound on $\dim_{\bar{\QQ}} V_X $ is so far the only known for a general Riemann surface with many automorphisms. Though weak, it can already be used to deduce special structures within the endomorphism algebras of Jacobians with  many automorphisms, as in the two examples below.

\begin{example}
\label{ex:genus1manyAutos}
Suppose $X$ has genus $g=1$, and that $X$ has many automorphisms. Then $\dim_{\bar{\QQ}} V_X = 1$ so that $\Jac(X) \simeq X$ has complex multiplication (compare to Example \ref{ex:CMEcurveSchneider} in Section \ref{sub:EndoAlgebras}). 
\end{example}

\begin{example}
In general, if $X$ has many automorphisms then $\End(\Jac(X)) \neq \ZZ$ always (compare to Example \ref{ex:trivialEndo} in Section \ref{sub:EndoAlgebras}).  
\end{example}

\section{Vector-valued modular forms and periods}
\label{sec:vvmfAndPeriods}
An excellent reference for the Riemann surface and automorphic function theory described in this section is Shimura's classic text \cite{Shimura}, and the vector-valued point of view we utilize here emerges naturally out of work of Knopp and Mason \cite{KnoppMason1,KnoppMason2,Mason07,Mason08} . We also note that the construction of Theorem \ref{thm:MainVVmfThm} below is a generalization of an idea used in Theorem 1.4 of the recent article [CHMY], where such a result was proven in a different manner for subgroups of the modular group $\G$.

Suppose $N$ is a normal, finite index subgroup of a Fuchsian group of the first kind $\Gamma\leq$ PSL$_2(\mathbb{R})$, and let $\mathbb{H}^*$ denote the union of $\mathbb{H}$ with the cusps of $N$ in $\mathbb{R}\cup\{\infty\}$ (note that if $\Gamma$ is cocompact in PSL$_2(\mathbb{R})$ then so is $N$, and this set of cusps is empty). Let $X(N)=N\backslash\mathbb{H}^*$ denote the compact Riemann surface $X(N)$ associated to $N$, and assume that the genus of $X(N)$ is $g\geq1$. The finite group $G= \Gamma/N$ acts as  automorphisms of the cover $X(N)\rightarrow X(\Gamma)$ and, via pullback (which is contravariant), acts on the vector space of differential forms for $X(N)$. This defines a $g$-dimensional linear representation  $G^{\rm{op}}\rightarrow \GL_g(\CC)$, the {\em canonical representation} of the cover. We may lift this representation to a group representation
\begin{equation}
\label{eq:liftedCanonicalRep}
\rho_N: \Gamma^{\rm{op}} \rightarrow \GL_g(\CC)
\end{equation}
via the quotient map $\Gamma \rightarrow \Gamma/N$. Let $\{\omega_1, \ldots, \omega_g\}$ be a basis for the space $\Omega^1(X(N))$ of holomorphic 1-forms for $X(N)$. Under the uniformization isomorphism $X(N) \simeq N\backslash\mathbb{H}^*$, we can lift $\omega_1, \ldots, \omega_g$ to a basis $\{f_1, \ldots, f_g\}$ of weight two cusp forms on $N$. The vector
\[F:=(f_1,\cdots,f_g)^t\]
is then a weight two vector-valued cusp form for the opposite representation $\rho:=\rho^{\rm opp}_{N}$. In other words, $F(\tau)$ is holomorphic in $\mathbb{H}$, vanishes at any cusps that $N$ may have, and for each $\gamma=\twobytwo{a}{b}{c}{d}\in\Gamma$ the functional equation
\begin{equation}\label{eq:vvcuspform}
F(\gamma\tau)(c\tau+d)^{-2}=\rho(\gamma)F(\tau)
\end{equation}
is satisfied.  

Fix a base-point $\tau_0\in X(N)$. For each $1\leq k\leq g,$ let 
\[u_k(\tau)=\int_{\tau_0}^\tau f_k(z)\,dz.\]
These functions have an important transformation property:

\begin{theorem}
\label{thm:MainVVmfThm}
$U=(u_1,\ldots,u_g,1)^t$ is a holomorphic vector-valued automorphic function for a representation $\pi_N: \Gamma\rightarrow \GL_{g+1}(\CC)$ arising from a  non-trivial extension of the form 
\[0 \rightarrow \rho \rightarrow \pi_{N} \rightarrow 1 \rightarrow 0.\]
\end{theorem}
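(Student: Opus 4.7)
The plan is to compute $U(\gamma\tau)$ directly from the definition of the $u_k$, identify the resulting transformation as a block upper-triangular matrix $\pi_N(\gamma)$ determined by a $1$-cocycle $c$, and then exploit the fact that $\rho$ is trivial on $N$ to obstruct the splitting of the associated extension.

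First I would verify holomorphicity, which is immediate: since $\HH$ is simply connected and each $f_k$ is holomorphic there, the integral defining $u_k(\tau)=\int_{\tau_0}^\tau f_k(z)\,dz$ is path-independent and holomorphic in $\tau$. To obtain the transformation law, for $\gamma=\twobytwo{a}{b}{c}{d}\in\Gamma$ I split the integral as
\[
u_k(\gamma\tau)=\int_{\tau_0}^{\gamma\tau_0}f_k(z)\,dz+\int_{\gamma\tau_0}^{\gamma\tau}f_k(z)\,dz,
\]
and substitute $z=\gamma w$ in the second integral. Using $dz=(cw+d)^{-2}dw$ together with the functional equation \eqref{eq:vvcuspform} to recognize $f_k(\gamma w)(cw+d)^{-2}$ as the $k$-th component of $\rho(\gamma)F(w)$, this computation yields
\[
u_k(\gamma\tau)=c_k(\gamma)+\sum_{j=1}^{g}\rho(\gamma)_{kj}\,u_j(\tau),\qquad c_k(\gamma):=\int_{\tau_0}^{\gamma\tau_0}f_k(z)\,dz.
\]
Writing $c(\gamma):=(c_1(\gamma),\ldots,c_g(\gamma))^t$ and appending the constant last coordinate of $U$, this gives $U(\gamma\tau)=\pi_N(\gamma)U(\tau)$ with
\[
\pi_N(\gamma)=\begin{pmatrix}\rho(\gamma) & c(\gamma)\\ 0 & 1\end{pmatrix},
\]
from which the short exact sequence $0\to\rho\to\pi_N\to 1\to 0$ is visible by inspection.

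Next I would check that $\pi_N$ is actually a group homomorphism. Comparing $U(\gamma_1\gamma_2\tau)$ computed directly with $U(\gamma_1(\gamma_2\tau))$ reduces the identity $\pi_N(\gamma_1\gamma_2)=\pi_N(\gamma_1)\pi_N(\gamma_2)$ to the $1$-cocycle relation $c(\gamma_1\gamma_2)=c(\gamma_1)+\rho(\gamma_1)c(\gamma_2)$, which follows from the additivity of path integrals combined with the same change of variables used above. Consequently $[c]\in H^1(\Gamma,\rho)$ is the cohomology class classifying the extension $\pi_N$.

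The main obstacle, and the heart of the argument, is showing that $\pi_N$ does not split, i.e.\ that $[c]\neq 0$. If the extension split, there would exist $v\in\CC^g$ with $c(\gamma)=(\rho(\gamma)-I)v$ for every $\gamma\in\Gamma$. I would then restrict to the normal subgroup $N$: since $\rho$ was lifted from $G=\Gamma/N$, one has $\rho|_N=I$, so the coboundary hypothesis would force $c|_N\equiv 0$. But for $n\in N$, the path from $\tau_0$ to $n\tau_0$ descends to a closed loop on $X(N)$, and the integral $c_k(n)$ is precisely the period of the $1$-form $\omega_k$ over the corresponding cycle; more invariantly, $c|_N$ factors through $N^{\mathrm{ab}}\twoheadrightarrow H_1(X(N),\ZZ)$ and realizes the classical period map of $X(N)$. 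Since $g\geq 1$, the period lattice has rank $2g>0$, so $c|_N$ is not identically zero, contradicting the coboundary hypothesis. The essential conceptual input is this identification of $c|_N$ with the period map, which converts the cohomological obstruction to splitting into the classical non-vanishing of periods.
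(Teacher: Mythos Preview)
Your proof is correct and follows the same route as the paper for the transformation law and the verification that $\pi_N$ is a homomorphism; the computations are essentially identical, with your $c(\gamma)$ playing the role of the paper's $\Omega(\gamma)$.

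The only point of divergence is the non-splitting argument. The paper observes that if the extension split then $N\subseteq\ker\pi_N$, whence each $u_k$ would descend to a holomorphic function on the compact surface $X(N)$ and hence be constant, contradicting $u_k'=f_k\not\equiv 0$. You instead restrict the cocycle to $N$, use $\rho|_N=I$ to conclude that a coboundary would vanish there, and then invoke the non-triviality of the period lattice to obtain the contradiction. Both arguments are valid and closely related: yours is more in keeping with the period-theoretic theme of the paper (and in fact anticipates the identification of $\Omega|_N$ with the period map that the paper makes immediately after the proof), while the paper's compactness argument is slightly more self-contained in that it does not appeal to the rank of the period lattice.
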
 

\begin{proof} Writing $U(\tau)=\left(\int_{\tau_0}^\tau F(z)\,dz,1\right)^t$, we first note that the functional equation (\ref{eq:vvcuspform}) for $F$ may be written as 
\[F(\gamma\tau)\,d(\gamma\tau)=\rho(\gamma)F(\tau)\,d\tau.\]
For each $\gamma\in\G$, set
\begin{equation}\label{eq:periods}
\Omega(\gamma)=\int_{\tau_0}^{\gamma\tau_0} F(z)\,dz.
\end{equation}
Then for any $\gamma\in\Gamma$ we have
\begin{align*}U(\gamma\tau)&=\begin{pmatrix}\int_{\tau_0}^{\gamma\tau}F(z)\,dz\\1\end{pmatrix}\\
&=\begin{pmatrix}\int_{\tau_0}^{\gamma\tau_0}F(z)\,dz+\int_{\gamma\tau_0}^{\gamma\tau}F(z)\,dz\\1\end{pmatrix}\\
&=\begin{pmatrix}\Omega(\gamma)+\int_{\tau_0}^\tau F(\gamma z)\,d(\gamma z)\\1\end{pmatrix}\\
&=\begin{pmatrix}\Omega(\gamma)+\rho(\gamma)\int_{\tau_0}^\tau F(z)\,dz\\1\end{pmatrix}\\
&=\pi_N(\gamma)U(\tau)
\end{align*}
where we define 
\[\pi_N(\gamma)=\twobytwo{\rho(\gamma)}{\Omega(\gamma)}{0}{1}\in\GL_{g+1}(\CC).\]
One sees now that $\pi_N$ is a representation of $\G$ if and only if the relation
\[\Omega(\gamma_1\gamma_2)=\rho(\gamma_1)\Omega(\gamma_2)+\Omega(\gamma_1)\]
obtains for any $\gamma_1,\gamma_2\in\G$. Similar to the above computation, we have
\begin{align*}
\Omega(\gamma_1\gamma_2)&=\int_{\tau_0}^{(\gamma_1\gamma_2)\tau_0}F(z)\,dz\\
&=\int_{\tau_0}^{\gamma_1\tau_0}F(z)\,dz+\int_{\gamma_1\tau_0}^{\gamma_1(\gamma_2\tau_0)}F(z)\,dz\\
&=\Omega(\gamma_1)+\int_{\tau_0}^{\gamma_2\tau_0}F(\gamma_1z)\,d(\gamma_1z)\\
&=\Omega(\gamma_1)+\rho(\gamma_1)\int_{\tau_0}^{\gamma_2\tau_0}F(z)\,dz\\
&=\Omega(\gamma_1)+\rho(\gamma_1)\Omega(\gamma_2)
\end{align*}
so $\pi_N$ is indeed a representation. To see that this extension of $\rho$ by the trivial character of $\Gamma$ does not split, we simply need to observe that if it did, then $\ker(\pi_N)$ would contain $N$. This would imply that the Riemann surface $X(N)$ admits nonconstant holomorphic functions, namely the $u_k$, but this is impossible since $X(N)$ is compact.  This completes the proof of the theorem. 
\end{proof}

For each $\gamma \in N$, the map $\tau_0 \mapsto \gamma\tau_0$ defines a closed loop on $X(N)$. One observes that the proof of Proposition 1.4 in \cite{Manin} remains valid for any Fuchsian group of the first kind in PSL$_2(\RR)$, so there is a surjective homomorphism of groups $N\rightarrow H_1(X(N),\ZZ)$ given by $\gamma \rightarrow [\tau_0 \mapsto \gamma\tau_0]$ whose kernel is generated by the commutator subgroup $N'$ together with the elliptic and parabolic elements of $N$. Thus there is some set $\gamma_j$, $1\leq j\leq2g$, of hyperbolic elements of $N$ that surjects onto a basis of $H_1(X(N),\ZZ)$, and using the above notation we find that the vectors $\Omega(\gamma_j)\in\CC^g$ generate the period lattice $\Lambda$ that defines the Jacobian of $X(N)$. On the other hand, from the proof of Theorem \ref{thm:MainVVmfThm} we see for each $j$ we have 
\[\pi_N(\gamma)=\twobytwo{\rho(\gamma)}{\Omega(\gamma)}{0}{1},\]
so in fact these vectors appear in the matrices defining the extension representation $\pi_N$. Thus, when these periods are algebraic Theorem \ref{thm:MainVVmfThm} allows one to compute them explicitly, as was already demonstrated for modular curves in \cite{Candelori-Marks}. Even when they cannot be computed explicitly (since in general at least some of them are transcendental), the existence and analysis of $\pi_N$ yields important information about the periods, as we demonstrate below in Section \ref{sub:sharperBound}.

\section{Group cohomology}
\label{section:groupCohomology}

Let $N$ be a normal subgroup of finite index in a Fuchsian group $\Gamma$ of the first kind, and let $\rho_N$ be the canonical representation associated to the cover $X(N)\rightarrow X(\Gamma)$. Theorem \ref{thm:MainVVmfThm} shows that the periods of the curve $X(N)$ are the matrix entries of a non-trivial extension $\pi_N$ of the trivial representation by $\rho_N$. In this section we briefly recall how such extensions are classified by {\em group cohomology}. 

Let $\rho:\Gamma \rightarrow \GL(V)$ be a finite-dimensional complex representation of $\Gamma$. For any integer $n\geq 1$, denote by $C^n(\Gamma, \rho)$ the group of functions $\Gamma^n \rightarrow V$ (the {\em n-cochains}) and let $d^{n+1}: C^n\rightarrow C^{n+1}$ be the {\em coboundary} homomorphism given by
\begin{align*}
d^{n+1}\kappa(g_1,\ldots, g_{n+1}) = \rho(g_1)\kappa(g_2,&\ldots,g_n) + \sum_{i=1}^n (-1)^i \kappa(g_1, \ldots, g_{i-1}, g_ig_{i+1}, \ldots, g_{n+1}) + \\
 &+ (-1)^{n+1}\kappa(g_1, \ldots, g_n)
\end{align*}
For $n\geq 0$ let $Z^n(\Gamma,\rho):= \ker d^{n+1}$ be the group of {\em n-cocyles} and for $n\geq 1$ let $B^n(\Gamma,\rho):= \mathrm{ im}\, d^{n}$ be the group of {\em n-coboundaries}. Since $d^{n+1}\circ d^n = 0$, we have $B^n \subset Z^n$ for all $n\geq 1$ (so $(C^{\bullet},d^{\bullet})$ forms a {\em cochain complex}) and we can define the {\em n-th group cohomology} by 
\begin{align*}
H^0(\Gamma,\rho) &:= V^{\rho} = \{ v\in V: \rho(v) = v\} \\
H^n(\Gamma,\rho) &:= \frac{Z^n(\Gamma,\rho)}{B^n(\Gamma,\rho)}, \quad n\geq 1.
\end{align*}
In particular, for $n=1$ we have:
\begin{align*}
Z^1(\Gamma,\rho) &= \{ \kappa: \Gamma\rightarrow V \,|\,  \kappa(g_1g_2) = \rho(g_1)\kappa(g_2) + \kappa(g_1) \} \\
B^1(\Gamma,\rho) &= \{ \kappa: \Gamma\rightarrow V \,|\, \kappa(g) = \rho(g)v - v \text{ for some } v \in V\}.
\end{align*}

Let now $\mathrm{Ext}_{\Gamma}^1(\rho,1)$ be the set of isomorphism classes of extensions of $\Gamma$-representations $\rho \rightarrow \pi \rightarrow 1$, where an isomorphism $\pi  \simeq \pi'$ is an isomorphism of $\Gamma$-representations which restricts to an isomorphism $\rho \simeq \rho'$. Given any $\pi \in \mathrm{Ext}_{\Gamma}^1(\rho,1)$, we can find a basis for $\pi$ such that 
\[\pi(g) = \twobytwo{\rho(g)}{\kappa_{\pi}(g)}{0}{1},\]
as in the proof of Theorem \ref{thm:MainVVmfThm}. An easy computation shows that $\kappa_{\pi} \in Z^1(\Gamma,\rho)$ and that its class in $H^1$ completely classifies the extension $\pi$: 
\begin{proposition}
\label{prop:ExtensionsGroupCohomology}
The map $\pi \mapsto \kappa_{\pi}$ gives a bijection
\[\mathrm{Ext}_{\Gamma}^1(\rho,1) \longleftrightarrow H^1(\Gamma, \rho).\]\qed
\end{proposition}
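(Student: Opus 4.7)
The proposition is the standard classification of extensions by $H^1$, so the strategy is to verify that the recipe appearing in the proof of Theorem \ref{thm:MainVVmfThm} interacts correctly with the coboundary relation. I would organize the argument around a single block-matrix computation and run it four times: (i) to show $\kappa_\pi \in Z^1(\Gamma,\rho)$; (ii) to show the class $[\kappa_\pi] \in H^1$ is independent of the choice of adapted basis; (iii) to produce an inverse map from $Z^1$ to extensions; and (iv) to show that cohomologous cocycles give equivalent extensions.

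For well-definedness, I would start from a basis in which $\pi(g)$ has the claimed block-triangular shape with upper-right block $\kappa_\pi(g)$, and simply expand both sides of the identity $\pi(g_1 g_2) = \pi(g_1)\pi(g_2)$. Comparing the upper-right entries yields
\[\kappa_\pi(g_1 g_2) = \rho(g_1)\kappa_\pi(g_2) + \kappa_\pi(g_1),\]
which is precisely the $1$-cocycle condition spelled out in the section. To see the class is independent of the adapted basis, note that any two bases compatible with the extension structure (i.e.\ inducing the identity on $\rho$ and on the trivial quotient) differ by a matrix of the form $P_v = \bigl(\begin{smallmatrix} I & v\\ 0 & 1\end{smallmatrix}\bigr)$ for some $v \in V$. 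A direct computation gives
\[P_v^{-1} \twobytwo{\rho(g)}{\kappa_\pi(g)}{0}{1} P_v = \twobytwo{\rho(g)}{\kappa_\pi(g) + \rho(g)v - v}{0}{1},\]
so changing basis alters $\kappa_\pi$ by the coboundary $g \mapsto \rho(g)v - v \in B^1(\Gamma,\rho)$. Hence $\pi \mapsto [\kappa_\pi]$ is a well-defined map to $H^1(\Gamma,\rho)$.

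For surjectivity, I would take any $\kappa \in Z^1(\Gamma,\rho)$ and define $\pi_\kappa(g)$ by the same block-triangular formula. The cocycle identity, read in reverse, is exactly the condition needed to verify $\pi_\kappa(g_1 g_2) = \pi_\kappa(g_1)\pi_\kappa(g_2)$, so $\pi_\kappa$ is a representation; it manifestly sits in an extension of the trivial character by $\rho$, and by construction $\kappa_{\pi_\kappa} = \kappa$. For injectivity, suppose $\pi, \pi'$ are extensions with $\kappa_\pi - \kappa_{\pi'} = \rho(\,\cdot\,)v - v$ for some $v \in V$. Then the computation above, read in the other direction, shows that conjugation by $P_v$ sends $\pi$ to $\pi'$, and $P_v$ is an isomorphism of extensions since it induces the identity on $\rho$ and on the quotient. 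Therefore $\pi$ and $\pi'$ represent the same class in $\mathrm{Ext}^1_\Gamma(\rho,1)$.

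There is no genuine obstacle here; the work is entirely bookkeeping. The only point that requires a moment of care is the precise meaning of ``isomorphism of extensions'': the relevant equivalence is that induced by the group of matrices $P_v$, which is exactly the group that produces coboundaries, and it is this matching that makes the bijection clean. Once that is pinned down, the four steps above close the argument.
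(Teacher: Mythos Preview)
Your argument is correct and is the standard proof of this well-known fact. The paper itself supplies no proof at all: the proposition is stated with an immediate \qed, preceded only by the remark that ``an easy computation shows that $\kappa_\pi \in Z^1(\Gamma,\rho)$ and that its class in $H^1$ completely classifies the extension $\pi$.'' Your four-step block-matrix verification is exactly the easy computation the authors are alluding to, so there is nothing to compare.
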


Note that the group structure on $H^1(\Gamma, \rho)$ goes over naturally to give the {\em Baer sum} of extensions on $\mathrm{Ext}_{\Gamma}^1(\rho,1)$, and therefore the bijection of Proposition \ref{prop:ExtensionsGroupCohomology} can be promoted to a group isomorphism, and even to an isomorphism of $\CC$-vector spaces. The zero vector in $\mathrm{Ext}_{\Gamma}^1(\rho,1)$, corresponding to 1-coboundaries in $B^1(\Gamma,\rho)$, is given by those representations $\pi$ which are completely reducible, $\pi \simeq \rho\oplus 1$. In particular, note that if $\Gamma$ is a finite group then $H^1(\Gamma,\rho) = 0$ always, since in this case the category of $\Gamma$-representations is semi-simple. Going back to Theorem \ref{thm:MainVVmfThm}, for example, we see that the canonical representation $\rho_N$ cannot have any non-trivial extensions $\pi_N$ of it by 1 if viewed as a representation of $G:= \Gamma/N$. However, when lifted to a representation of $\Gamma$, Theorem \ref{thm:MainVVmfThm} says that a {\em canonical} non-trivial extension class 
$$
\pi_N \in H^1(\Gamma,\rho_N)
$$
becomes available, and is determined by the periods of $X(N)$. In the next section we study the properties of this class and connect it to the endomorphism algebra of the Jacobian of $X(N)$.

\section{A sharper bound on the dimension of the period $\bar{\QQ}$-span of Riemann surfaces with many automorphisms}
\label{sub:sharperBound}

We now go back to the case of triangle groups $\Delta = \Delta(p,q,r)$. Let $N\triangleleft\,\Delta$ and let $\rho_N$ be the canonical representation of $X(N)\rightarrow X(\Delta)$, lifted to a representation of $\Delta$ as in \eqref{eq:liftedCanonicalRep}. As explained in Section \ref{section:groupCohomology}, Theorem  \ref{thm:MainVVmfThm} gives a canonical class  $\pi_N \neq 0 \in H^1(\Delta, \rho_N)$ encoding the periods of $X(N)$. Using this new invariant of the curve $X(N)$, we obtain the following bound:

\begin{theorem}
\label{thm:MainThm}
Suppose $X(N)\rightarrow X(\Delta)$ is a Riemann surface of genus $g>0$ with many automorphisms, and let $\rho_N: \Delta^{\rm{op}} \rightarrow \GL_g(\CC)$ be its canonical representation. Let $V_{X(N)} := V_{\Jac(X(N))}$ be the span of $\bar{\QQ}$-periods  of $\Jac(X(N))$. Then 
$$
\dim_{\bar{\QQ}} V_{X(N)} \leq g - d_0 - d_1 - d_{\infty}
$$
where $d_x = \dim_{\CC}(\ker(\rho_N(\delta_x) - I_g))$ is the geometric multiplicity of the eigenvalue 1  in $\rho_N(\delta_x)$, $x\in \{0,1,\infty\}$. 
\end{theorem}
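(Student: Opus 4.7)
The plan is to derive the bound from a dimension count for $H^1(\Delta,\rho_N)$, starting from the extension cocycle $\Omega\in Z^1(\Delta,\rho_N)$ produced in Theorem~\ref{thm:MainVVmfThm}. The crucial observation is that $\rho_N$ factors through the finite quotient $G = \Delta/N$, so $\rho_N|_N$ is trivial and every coboundary $\delta v\in B^1(\Delta,\rho_N)$ vanishes identically on $N$. Consequently the restriction $\Omega|_N \colon N \to \CC^g$ --- whose entries, as $\gamma$ ranges over hyperbolic representatives of a $\ZZ$-basis of $H_1(X(N),\ZZ)$, span $V_{X(N)}$ by the period-lattice description recalled at the end of Section~\ref{sec:vvmfAndPeriods} --- depends only on the cohomology class $[\Omega]\in H^1(\Delta,\rho_N)$.

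Next, because $X(N)$ is defined over $\bar\QQ$ the canonical representation $\rho_N$ has a natural $\bar\QQ$-structure $\rho_{N,\bar\QQ}$ acting on $\Omega^1(X(N))_{\bar\QQ}$, and hence $H^1(\Delta,\rho_N)\cong H^1(\Delta,\rho_{N,\bar\QQ})\otimes_{\bar\QQ}\CC$. I would fix a $\bar\QQ$-basis of classes $[\Omega_1],\ldots,[\Omega_h]$ in $H^1(\Delta,\rho_{N,\bar\QQ})$ with cocycle representatives $\Omega_i$ valued in $\bar\QQ^g$, expand $[\Omega]=\sum_{i=1}^h c_i[\Omega_i]$ for unique scalars $c_i\in\CC$, and write $\Omega = \sum_i c_i\Omega_i + \delta w$ for some $w\in\CC^g$. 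Restriction to $N$ kills $\delta w$, so $\Omega(\gamma) = \sum_i c_i\Omega_i(\gamma)$ for every $\gamma\in N$, and every entry of every period vector is a $\bar\QQ$-linear combination of $c_1,\ldots,c_h$. This forces $V_{X(N)}\subseteq \bar\QQ\textrm{-span}(c_1,\ldots,c_h)$, and therefore
\[
\dim_{\bar\QQ} V_{X(N)} \;\leq\; h \;=\; \dim_\CC H^1(\Delta,\rho_N).
\]

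The remaining task is to verify that $\dim_\CC H^1(\Delta,\rho_N) = g - d_0 - d_1 - d_\infty$. Using the presentation $\Delta = \langle \delta_0,\delta_1,\delta_\infty \mid \delta_x^{n_x} = \delta_0\delta_1\delta_\infty = 1\rangle$, a $1$-cocycle $\kappa$ is determined by $(\kappa(\delta_0),\kappa(\delta_1))\in V^2$ (with $\kappa(\delta_\infty)$ forced by $\delta_0\delta_1\delta_\infty = 1$), subject to the three torsion constraints $N_x\kappa(\delta_x) = 0$ where $N_x = \sum_{j=0}^{n_x-1}\rho_N(\delta_x)^j$. Because each $\rho_N(\delta_x)$ is diagonalizable with eigenvalues among $n_x$-th roots of unity, $\mathrm{rank}(N_x) = d_x$, so these constraints cut the $2g$-dimensional parameter space down to $\dim Z^1 = 2g - d_0 - d_1 - d_\infty$. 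The coboundary map $v \mapsto \delta v$ has kernel $V^\Delta = (\Omega^1(X(N)))^G \cong \Omega^1(X(\Delta))$, which vanishes because $X(\Delta)$ has genus zero, so $\dim B^1 = g$ and the formula follows.

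The hard part will be rigorously justifying the independence of the three torsion constraints once the substitution enforcing $\delta_0\delta_1\delta_\infty = 1$ is made --- i.e.\ confirming that no non-trivial combination of $N_\infty\kappa(\delta_\infty) = 0$ can be absorbed into $N_0\kappa(\delta_0) = 0$ and $N_1\kappa(\delta_1) = 0$. This amounts to a representation-theoretic compatibility claim about the noncommuting operators $\rho_N(\delta_x)$, which I would handle either by invoking the Euler--Poincar\'e formula for cocompact Fuchsian groups in the $g_\Delta = 0$, $s = 3$ case of Eichler--Shimura, or by a direct Fox-calculus verification exploiting the diagonalizability of each $\rho_N(\delta_x)$.
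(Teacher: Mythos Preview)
Your argument is correct and arrives at the same bound, but by a genuinely different route than the paper. The paper does \emph{not} work directly with $H^1(\Delta,\rho_N)$: instead it lifts to the free product $\tilde\Delta=\Delta(p,q,\infty)\cong\ZZ/p\ZZ*\ZZ/q\ZZ$, computes $H^1(\tilde\Delta,\rho_N)\cong V/(V^A+V^B)$ via Mayer--Vietoris (Lemma~\ref{lemma:MV-isomorphism}), and then builds explicit ``fundamental cocycles'' $\kappa_1,\ldots,\kappa_d$ whose values lie in a cyclotomic field (Lemma~\ref{lemma:mainAlgebraicLemma}, via Brauer). Writing $\Omega=\sum\lambda_i\kappa_i$ gives the preliminary bound $g-d_0-d_1$; the final $d_\infty$ relations are then extracted by hand from the equation $\pi_N(\delta_\infty^{-r})=I_{g+1}$, rather than from a presentation of $\Delta$. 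Your approach is more conceptual on two counts: you obtain the $\bar\QQ$-structure on $H^1$ by flat base change rather than by constructing explicit algebraic cocycles, and your observation that coboundaries vanish on $N$ (since $\rho_N|_N$ is trivial) is exactly the mechanism that lets one pass from cohomology classes to actual period vectors---a point the paper uses implicitly but does not isolate. The trade-off is that the paper's explicit cocycles are what one actually needs to \emph{compute} period matrices (as in \cite{Candelori-Marks}), whereas your argument gives the bound without producing them. As for the step you flag as hard---the independence of the three torsion constraints after eliminating $\kappa(\delta_\infty)$---the paper effectively sidesteps it by working in $\tilde\Delta$ (where only two constraints appear) and then descending via the inflation--restriction sequence $0\to H^1(\Delta,\rho_N)\to H^1(\tilde\Delta,\rho_N)\to H^1(K,\rho_N)^\Delta\to 0$; this is equivalent to your proposed Euler--Poincar\'e computation and confirms $\dim H^1(\Delta,\rho_N)=g-d_0-d_1-d_\infty$.
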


\begin{proof}
First, let 
$$
\tilde{\Delta}:= \Delta(p,q,\infty) = \{\delta_0, \delta_1 : \delta_0^{p} = 1, \delta_1^q = 1\} \simeq \ZZ/p\ZZ * \ZZ/q\ZZ. 
$$
Let $\delta_{\infty}:= \delta_1^{-1}\delta_0^{-1} \in \tilde{\Delta}$ and let $K$ denote the smallest normal subgroup of $\tilde{\Delta}$ containing $\delta_{\infty}^r$. Taking the quotient by $K$ determines a surjective homomorphism $\tilde{\Delta} \rightarrow \Delta$, and we can lift $\rho_N$ to a representation of $\tilde{\Delta}$ (we will abuse notation and denote both $\rho_N$ and its lift to $\tilde{\Delta}$ by the same symbol). We need the following lemma about the group cohomology of $\tilde{\Delta}$.

\begin{lemma}
\label{lemma:MV-isomorphism}
Let $\rho:\tilde{\Delta}\rightarrow \GL(V)$ be any $g$-dimensional complex representation with $V^\rho = \{0\}$. Let $A= \rho(\delta_0), B = \rho(\delta_1)$ and let $V^A,V^B$ be the eigenspaces of eigenvalue 1 of $A,B$, respectively. Then there is canonical isomorphism of complex vector spaces
$$
\frac{V}{V^A + V^B} \stackrel{\delta}\longrightarrow H^1(\tilde{\Delta},\rho)
$$
given as follows: write $v = v_1 - v_2 \in \frac{V}{V^A + V^B}$ and let $\delta(v) = \kappa_v$ be the 1-cocycle in $Z^1(\tilde{\Delta},\rho)$ determined by 
$$
\kappa_v(A) = \rho(A)v_1 - v_1, \quad \kappa_v(B) = \rho(B)v_2 - v_2.
$$
In particular, $\dim H^1(\tilde{\Delta},\rho) = g - d_0 - d_1$.
\end{lemma}

\begin{proof}

The group cohomology of a free product with two factors can be computed via the Mayer-Vietoris sequence, which in the case of $\tilde{\Delta} \simeq \ZZ/p\ZZ * \ZZ/q\ZZ $ gives the exact sequence of vector spaces
\begin{equation}
\label{equation:MVSeq}
\begin{aligned}
V^{\rho} &\rightarrow V^A\oplus V^B \xrightarrow{(v^A, v^B) \mapsto v^A - v^B} V \stackrel{\delta}\longrightarrow  H^1(\tilde{\Delta},\rho)&\rightarrow H^1(\ZZ/p\ZZ,\rho)\oplus H^1(\ZZ/q\ZZ,\rho),
 \end{aligned}
\end{equation}
where $\delta$ is the map given in the statement of the Lemma. We want to show that $\delta$ is an isomorphism. By assumption $V^{\rho} = \{0\}$. We also have that $H^1(\ZZ/N\ZZ,\rho) = 0$ for any $N\geq 1$. Indeed, if the group $\ZZ/N\ZZ$ is generated by $\gamma$, then any extension $\rho\rightarrow \pi \rightarrow 1$ of $\ZZ/N\ZZ$-representations is determined by the matrix 
$$
\pi(\gamma)  = \twobytwo{\rho(\gamma)}{\kappa(\gamma)}{0}{1}.
$$
But $\pi(\gamma)$ has finite order (dividing $N$) which means it is diagonalizable, and $\kappa(\gamma)$ differs from 0 by a coboundary. Therefore $H^1(\ZZ/p\ZZ,\rho) = H^1(\ZZ/q\ZZ,\rho) = 0$ and $\delta$ is an isomorphism. 
\end{proof}

Since the fixed vectors of $\rho_N$ are the differential forms on $X(\Delta)$, and since $X(\Delta)$ has genus zero, $\rho_N$ satisfies the hypothesis of Lemma \ref{lemma:MV-isomorphism}. Using this Lemma, we may define a canonical basis for $H^1(\tilde{\Delta},\rho_N)$ as follows: let $a = \dim V^A = d_0, b = \dim V^B = d_1$ and choose bases $\underline{v}^A=\{v_1^A, \ldots, v_a^A\}, \underline{v}^B= \{v_1^B, \ldots, v_b^B\}$ for $V^A, V^B$, respectively. Since $V^A\cap V^B =V^{\rho_N} = \{0\}$, we may extend $\underline{v}^A\cup \underline{v}^B$ to a basis $\{v_1, \ldots, v_d\}\cup\underline{v}^A\cup \underline{v}^B$ of $V$, where $d := g -a -b$. With respect to this choice of basis, the `fundamental cocycles' $\kappa_1, \ldots, \kappa_d$, determined by 
$$
\kappa_1(\delta_0) = \left( \begin{array}{c}
1 \\ 
0 \\ 
\vdots \\ 
0
\end{array} \right) = e_1, \quad \kappa_1(\delta_1) = \left( \begin{array}{c}
0 \\ 
\vdots \\ 
0
\end{array} \right), \quad \ldots, \quad \kappa_d(\delta_0) = \left( \begin{array}{c}
0 \\  
\vdots \\ 
0 \\
1 \\
0 \\
\vdots \\
0 
\end{array} \right) = e_d, \quad \kappa_d(\delta_1) = \left( \begin{array}{c}
0 \\ 
\vdots \\ 
0
\end{array} \right)
$$  
form a canonical basis for $H^1(\widetilde{\Delta},\rho_N)$. We next need a lemma which determines the `field of definition' of the $\kappa_i$'s. 

\begin{lemma}
\label{lemma:mainAlgebraicLemma}
Let $\rho:\widetilde{\Delta}\rightarrow \GL(V)$ be any finite-dimensional complex representation of finite image and with $V^\rho = \{0\}$. Suppose 
$
\dim H^1(\widetilde{\Delta},\rho) = d.
$
Then there is a choice of basis for $V$ such that the fundamental coycles $\kappa_i$ have entries in $\QQ(\zeta_e)$, where $\zeta_e$ is a primitive $e$-th root of unity and $e$ is the exponent of the finite group $\widetilde{\Delta}/\ker \rho$. 
\end{lemma}

\begin{proof}
Since $\rho$ is of finite image, a basis $\beta$ can be chosen so that the matrix entries of $\rho$ belong to $\QQ(\zeta_e)$, where $\zeta_e$ is a primitive $e$-th root of unity and $e$ is the exponent of the finite group $\widetilde{\Delta}/\ker \rho$ \cite{Brauer}. From this basis $\beta$ we may compute a basis for $V^A + V^B$ and extend it to a basis of $V$ to get fundamental cocyles $\kappa_i(A),\kappa_i(B)$, without changing the field of definition $\QQ(\zeta_e)$ of the entries of $\rho$, by basic linear algebra. Since $\kappa_i(A),\kappa_i(B) \in \ZZ^g$, the entries of a general vector $\kappa_i(\gamma)$, $\gamma \in \widetilde{\Delta}$, are polynomial in the matrix entries of $\rho$, thus they are also contained in  $\QQ(\zeta_e)$. 
\end{proof}

Let now $\pi_N$ be the representation of Theorem \ref{thm:MainVVmfThm}, viewed as a representation of $\widetilde{\Delta}\rightarrow \Delta$, and let $\kappa_{\pi_N}$ be $\Omega$, the 1-cocycle (\ref{eq:periods}) determined by $\pi_N$ as shown the proof of that theorem. Since $\rho_N$ has no fixed vectors, we can write $\Omega$ in terms of the fundamental cocycles $\kappa_i$, 
$$
\Omega = \sum_{i=1}^d \lambda_i\, \kappa_i , \quad \lambda_i \in \CC
$$
where $d = \dim H^1(\widetilde{\Delta}, \rho_N)$. As discussed after the proof of Theorem \ref{thm:MainVVmfThm}, the periods of the curve $X(N)$ are precisely the entries of $\Omega(\gamma)$, where $\gamma$ runs through the hyperbolic elements of $N$. The periods of $X(N)$ are thus the entries of the vectors
$$
\Omega(\gamma) = \sum_{i=1}^d \lambda_i\, \kappa_i(\gamma).
$$
Since $\rho_N$ has finite image, a basis can be chosen so that the entries of $\kappa_i(\gamma_i)$ are contained in $\QQ(\zeta_e)$, where $e$ is the exponent of the finite group $G=\Delta/N$, by Lemma \ref{lemma:mainAlgebraicLemma}. Therefore, the periods can be expressed as $\overline{\QQ}$-linear combinations of complex numbers  $\lambda_1, \ldots, \lambda_d \in \CC$ (which are potentially linearly dependent over $\overline{\QQ}$). By the result of Lemma \ref{lemma:MV-isomorphism}, we deduce that 
\begin{equation}
\label{eq:DeltaTildeBound}
\dim_{\overline{\QQ}} V_{X(N)} \leq d = \dim H^1(\widetilde{\Delta}, \rho_N) = g - d_0 - d_1
\end{equation}
which proves Theorem \ref{thm:MainThm} for the larger group $\widetilde{\Delta}$. In order to deduce the sharper result for $\Delta$, we use the fact that $\pi_N$ is actually a representation of $\Delta$, and therefore $\pi_N(\delta_{\infty}^{-r})  = I_{g+1}$. In particular, 
\begin{align*}
0 &= \Omega(\delta_{\infty}^{-r}) \\
&= \sum_{j=0}^{r-1} \rho_N(\delta^j_{\infty})\Omega(\delta_{\infty}^{-1}) \\
&= \sum_{j=0}^{r-1} \rho_N(\delta^j_{\infty})\Omega(\delta_0\delta_1) \\
&=  \sum_{j=0}^{r-1} \rho_N(\delta^j_{\infty})\Omega(\delta_0) \quad ( \text{since } \Omega(\delta_1) = \sum_{i=1}^d \lambda_i\, \kappa_i(\delta_1) = 0 ) \\
& = \sum_{i=1}^{d} \lambda_i T\kappa_i(\delta_0) 
\end{align*}
where for ease of notation we let $T:= \sum_{j=0}^{r-1} \rho_N(\delta^j_{\infty})$. Since $\rho_N$ is a representation of $\Delta$, $\rho_N(\delta_{\infty})$ is of order dividing $r$. It is therefore diagonalizable as  
$\rho_N(\delta_{\infty}) \sim \mathrm{diag}(\alpha_1, \ldots, \alpha_g)$, where each $\alpha_i$ is an $r$-th root of unity. By a standard computation with roots of unity it follows that $T$ has the block decomposition 
$$
T \sim \left( \begin{array}{cc} r\,I_{d_{\infty}} & 0 \\ 0 & 0  \end{array} \right) 
$$ 
with respect to a basis of eigenvectors for $\rho_N(\delta_{\infty})$, where the basis for the eigenspace of eigenvalue $\alpha = 1$ is placed first. Let $M$ be the change-of-basis matrix corresponding to this choice of basis. Then the entries of $M$ can be chosen to be in $\overline{\QQ}$, since the entries of $\rho_N(\delta_{\infty})$ are. We thus have the vector equation 
$$
0 = \sum_{i=1}^{d} \lambda_i \, M^{-1}\left( \begin{array}{cc} r\,I_{d_{\infty}} & 0 \\ 0 & 0  \end{array} \right)M \kappa_i(\delta_0) 
$$
which shows that we have an additional $d_{\infty}$-many independent $\overline{\QQ}$-linear relations among the $\lambda_i$'s. Combined with \eqref{eq:DeltaTildeBound}, we get the sharper bound
$$
\dim_{\overline{\QQ}} V_{X(N)} \leq  g - d_0 - d_1 - d_{\infty},
$$
which proves Theorem \ref{thm:MainThm}. 

\end{proof}

\begin{remark}
Consider the inflation-restriction exact sequence of vector spaces
$$
0 \rightarrow H^1(\Delta, \rho_N) \rightarrow H^1(\widetilde{\Delta}, \rho_N) \stackrel{\mathrm{Res}}\rightarrow H^1(K,\rho_N)^{\Delta} \rightarrow 0
$$
associated to the group homomorphisms $K\hookrightarrow \widetilde{\Delta} \rightarrow \Delta$, where $\mathrm{Res}(\kappa) = [\kappa(\delta_{\infty})^r]$. Then by the proof of Theorem \ref{thm:MainThm}  it is clear that 
$
\dim H^1(\Delta, \rho_N) = g - d_0 - d_1 - d_{\infty}
$
so the bound of Theorem \ref{thm:MainThm} can be stated more concisely as 
$$
\dim_{\overline{\QQ}} V_{X(N)} \leq  \dim H^1(\Delta, \rho_N).
$$
\end{remark}

\begin{remark}
The bound of Theorem \ref{thm:MainThm} tends to be {\em much} smaller than that of Theorem \ref{thm: WolfartBound}. Indeed, for $\Delta = \Delta(p,q,r)$ `on average' we expect that
\[\dim_{\CC} H^1(\Delta, \rho_N) \sim g - g/p - g/q - g/r   \ll g.\]
\end{remark}

The numbers $d_0, d_1, d_{\infty}$ are easy to compute. Indeed, the character $\chi_N$ of $\rho_N$ can be computed explicitly using the Chevalley-Weil formula  \cite{Chevalley-Weil} (see \cite{Candelori-CW} for a modern account), and the numbers $d_0, d_1, d_{\infty}$ can then be computed from the knowledge of $\chi_N$ and of the character table of the finite group $G = \Delta/N$. These computations can easily be implemented in GAP or MAGMA, as we show in the next section. 

\section{Examples}
\label{sec:Examples}

\begin{example}(Bolza surface) In genus two, the Riemann surface with largest automorphism group is known as the {\em Bolza surface}. This Riemann surface has many automorphisms, and it can be uniformized as $X(N)$, by an index 48 normal subgroup $N\triangleleft\, \Delta(2,3,8)$. The automorphism group is $G\simeq\GL(2,3)$. The canonical representation $\rho_N$ in this case is irreducible, which implies that  
$$
J(X) \sim A^k
$$
for a simple abelian variety $A$ \cite{Wolfart}. The only possibilities are either $\dim A = 1$, $k=2$ or $\dim A = 2, k=1$. We have 
$$
\dim H^1(\Delta(2,3,8), \rho_N) = 1
$$
and so by Theorems \ref{thm:MainThm} and \ref{thm:ShigaWolfartTheorem} 
$$
\dim_{\bar{\QQ}} V_X = \frac{2\dim A^2}{\dim_{\QQ} \End_0(A)} = 1.
$$
If $\dim A = 2$, then this can only happen if $\dim_{\QQ} \End_0(A) = 8$, which is impossible since $\dim_{\QQ} \End_0(A) \leq 4$ for any abelian surface $A$. Therefore $\Jac(X) \sim E^2$ for some elliptic curve $E$. Moreover, by Example \ref{ex:CMEcurveSchneider} we deduce that $E$ has complex multiplication, so that $X$ has complex multiplication as well. The period matrix of $X$ can be computed explicitly using our methods, as in \cite{Candelori-Marks}. 

\end{example}

\begin{example}[Klein quartic]There is a unique normal subgroup $N\triangleleft\,\Delta(2,3,7)=\Delta$ of index 168. The corresponding curve $X=X(N)$ is the famous `Klein quartic', of genus $g=3$. By Chevalley-Weil, the canonical representation $\rho_N$ of this curve is irreducible, and 
$$
\dim H^1(\Delta, \rho_N) = 1.
$$
Since $\rho_N$ is irreducible, this means that 
$
\Jac(X) \sim A^k
$,
where $A$ is a simple abelian variety \cite{Wolfart}. Since $\dim \Jac(X) = g = 3$,  either $\dim A = 1$ and $k=3$ or $\dim A = 3$ and $k=1$. We rule out the latter as follows. Using Thm. \ref{thm:ShigaWolfartTheorem} and Thm. \ref{thm:MainThm} we deduce that 
$$
\dim_{\bar{\QQ}} V_X = \frac{2\dim A^2}{\dim_{\QQ} \End_0(A)} = 1.
$$
If $\dim A = 3$, we get $\dim_{\QQ} \End_0(A)=18$. This is impossible, since for any abelian threefold, $\dim_{\QQ} \End_0(A) \leq  6$. Therefore $\dim A = 1, k=3$ and 
$
\Jac(X) \sim A^3
$
is totally decomposable and $\dim_{\QQ} \End_0(A) =2$, i.e., $A$ is a CM elliptic curve. Thus $J(X)$ is also CM, and again it has totally decomposable Jacobian.    
\end{example}

\begin{example}[Macbeath curve]There is a unique normal subgroup  $N\triangleleft\Delta(2,3,7) = \Delta$ of index 504. The corresponding curve $X = X(N)$ is the famous `Macbeath curve', of genus $g=7$. Again by Chevalley-Weil, this curve has an irreducible canonical representation $\rho_N$ with
$$
\dim H^1(\Delta, \rho_N) = 2.
$$
Since $\rho_N$ is irreducible, this means again that 
$
\Jac(X) \sim A^k,
$
with $A$ simple. Since $\dim \Jac(X) = g = 7$,  either $\dim A = 1$ and $k=7$ or $\dim A = 7$ and $k=1$.  Using Theorem \ref{thm:ShigaWolfartTheorem} and Theorem \ref{thm:MainThm} we deduce that 
$$\frac{\dim A^2}{\dim_{\QQ} \End_0(A)} \leq 1.$$ 
If $\dim A = 7$, we get $\dim_{\QQ} \End_0(A) \geq 49$, which is impossible. Therefore 
$
\Jac(X) \sim A^7
$
with $A=E$ an elliptic curve, so that $\Jac(X)$ is totally decomposable. The question of whether $E$ has complex multiplication or not was posed by Berry and Tretkoff in \cite{Berry-Tretkoff}. Later Wolfart \cite{Wolfart} proved that $E$ does not have CM by finding an explicit Weierstrass equation for $E$. Therefore 
\begin{equation}
\label{equation:dimH1BoundMacbeath}
\frac2{\dim A^2}{\dim_{\QQ} \End_0(A)} = 2 = \dim H^1(\Delta, \rho_N).
\end{equation}

\end{example}

\begin{example}
In genus 14, there are three Hurwitz curves with automorphism group isomorphic to $\PSL(2,13)$, of order 1092. They can all be uniformized by three distinct normal subgroups of index 1092 in $\Delta(2,3,7)$. The canonical representations are all irreducible, with 
$$
\dim H^1(X(N), \rho_N) = 2.
$$
Writing $\Jac(X) \sim A^k$, we must again have 
\begin{equation}
\label{equation:dimOfEnd2}
\frac{\dim A^2}{\dim_{\QQ} \End_0(A)} \leq 1,
\end{equation}
as for the Macbeath curve. This equality rules out $\dim A = 7, 14$ but both $\dim A = 1,2$ are possible. It is known in fact that $\Jac(X) \sim E^{14}$ for $A=E$ an elliptic curve, using the {\em group algebra decomposition} of $\Jac(X)$ \cite[13.6]{BL}, \cite{Paulhus}. By \eqref{equation:dimOfEnd2} we cannot conclude whether $E$ has complex multiplication or not. It would be interesting to determine whether or not each of the three genus 14 Hurwitz curves has complex multiplication or, equivalently, whether or not the equality \eqref{equation:dimH1BoundMacbeath} is attained for these Hurwitz curves. 
\end{example}

\begin{remark}
As shown in the above examples, it is possible to use our Theorem \ref{thm:MainThm} to give a sufficient criterion for the Riemann surface $X$ to have {\em complex multiplication}. This criterion is similar in spirit to that of Streit \cite{Streit}. It would be interesting to work out the exact relation between the two methods to detect complex multiplication.
\end{remark}

\begin{remark}
Using the group algebra decomposition of $\Jac(X)$ \cite[13.6]{BL} it is possible to give bounds for $\dim V_X$ via the Shiga-Wolfart Theorem \ref{thm:ShigaWolfartTheorem}. It would be interesting to determine precisely which part of the group algebra decomposition is determined by the bounds given in Theorem \ref{thm:MainThm}.
\end{remark}

\renewcommand\refname{References}
\bibliographystyle{alpha}
\bibliography{Jac}
\end{document}